\newtheorem{prop}[theorem]{Proposition}
\newtheorem{problem}[theorem]{Problem}
\newcommand{\G}{\Gamma}
\def\tr{\mathop{\rm tr }\nolimits}
\begin{document}

\bibliographystyle{plain}
\title{
Graphs with many valencies and few eigenvalues\thanks{This version is published in Electronic Journal of Linear Algebra 28 (2015), 12--24.}}

\author{
Edwin R.\ van Dam\thanks{Department of Econometrics and O.R., Tilburg University,
	P.O. Box 90153, 5000 LE Tilburg, The Netherlands (Edwin.vanDam@uvt.nl)}
% Remember to put \and between any two authors
\and
Jack H.\ Koolen\thanks{Wen-tsun Wu Key Laboratory of CAS, School of Mathematical Sciences, University of Science and Technology of China, Hefei, 230026, Anhui, China
(koolen@ustc.edu.cn).}
\and
Zheng-jiang Xia\thanks{School of Mathematical Sciences,
University of Science and Technology of China, Hefei, 230026, Anhui, China
(xzj@mail.ustc.edu.cn).}}
% Note that \footnotemark[3]} is used for the third author
% because of the same affiliation for the second and third authors.
% If the same affiliation is to be used for the first and second authors,
% \footnotemark[2] should be used instead of \thanks{} for the second author.

% Authors and running title to go on top of each page
\pagestyle{myheadings}
\markboth{E.R.\ van Dam, J.H.\ Koolen, and Z.\ Xia}{Graphs with many valencies and few eigenvalues}
\maketitle

\begin{center}
In memory of David Gregory
\end{center}

\begin{abstract}
Dom de Caen posed the question whether connected graphs with
three distinct eigenvalues have at most three distinct valencies. We do not answer this question, but instead construct connected graphs with four and five distinct eigenvalues and arbitrarily many distinct valencies. The graphs with four distinct eigenvalues come from regular two-graphs. As a side result, we characterize the disconnected graphs and the graphs with three distinct eigenvalues in the switching class of a regular two-graph.
\end{abstract}

\begin{keywords}
eigenvalues of graphs, non-regular graphs, few eigenvalues, strong graphs, regular two-graphs, Seidel switching.
\end{keywords}
\begin{AMS}
05C50.
\end{AMS}

%%%%%%%%%%%%%%%%%%%%%%%%%%%%%%%%%%%%%%%%%%%%%%%%%%%%%%%%%%%%%
\section{Introduction}

Dom de Caen (see \cite[Problem 9]{Dom} and \cite{dCvDS}) posed the question whether connected graphs with
three distinct eigenvalues have at most three distinct valencies. By eigenvalues of a graph we mean here the eigenvalues of the adjacency matrix. More generally, one may wonder whether
the number of distinct valencies in a graph is bounded by a function of the number of distinct eigenvalues. It is important to
restrict to connected graphs because, for example, the disjoint union of the complete bipartite graphs
on $2^i+2^{t-i}$ ($i=0,1,\dots,t$) vertices has three (distinct) eigenvalues and $t+1$ (distinct) valencies. Hence the number of valencies cannot be bounded as a function of the number of eigenvalues. Instead of the adjacency eigenvalues, one may of course pose similar questions for other eigenvalues of graphs. It is for example known that connected graphs with three Laplacian eigenvalues have at most two valencies; see \cite{mumubar}.

Mohar [private communication] observed that by adding one vertex and joining it in an arbitrary way to each
of the components of the above graph, one obtains a connected graph with many valencies, but interlacing of
eigenvalues (see \cite{HaeInterlacing}) implies that the number of eigenvalues is at most seven. Thus, it
follows that with only seven eigenvalues, the number of valencies can be arbitrarily large.

In this paper, we will further exploit Mohar's idea to construct connected bipartite graphs with five eigenvalues and arbitrarily many valencies. In the same spirit, we will construct connected non-bipartite graphs with five eigenvalues and many valencies. Moreover, we will use regular two-graphs and Seidel switching to construct connected graphs with four eigenvalues and many valencies. In particular, we shall use switching in the symplectic strongly regular graphs with respect to subgraphs that have many valencies. Still, De Caen's question remains open, as is the question whether there are connected bipartite graphs with four eigenvalues and more than four valencies.

As a side result, we will show that if a graph in the switching class of a non-trivial regular two-graph is disconnected, then it must be the disjoint union of a strongly regular graph and an isolated vertex. Also, if a graph in the switching class of a non-trivial regular two-graph has only three eigenvalues, then it must be regular, and hence strongly regular.

For background on eigenvalues of graphs we refer to the monographs by Brouwer
and Haemers \cite{bh12} and Cvetkovi\'c, Doob, and Sachs \cite{cds82}.

\section{Bipartite graphs with five eigenvalues}

Let us first exploit Mohar's construction further, in the sense that we want to bring the number of
eigenvalues down from seven to five.

\begin{theorem} For every integer $t$, there is a connected bipartite graph with five distinct eigenvalues and at least $t$ distinct valencies.
\end{theorem}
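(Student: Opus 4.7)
The plan is to adapt Mohar's construction so that the extra vertex preserves bipartiteness and brings the eigenvalue count down from seven to five. For a fixed integer $t \geq 1$, let $G_i = K_{2^i,\,2^{t-i}}$ for $i = 0, 1, \ldots, t$, with bipartition $(L_i, R_i)$ of sizes $2^i$ and $2^{t-i}$. Form $H$ by taking the disjoint union of the $G_i$ and attaching a new vertex $v$ to every vertex of $L := L_0 \cup \cdots \cup L_t$. Then $H$ is bipartite (with parts $L$ and $\{v\} \cup R_0 \cup \cdots \cup R_t$), connected via $v$, and its valencies are $2^i$ on $R_i$, $2^{t-i}+1$ on $L_i$, and $2^{t+1}-1$ at $v$, yielding at least $t+1$ distinct values.

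To find the spectrum of $H$, I would exploit the equitable partition $\{L_0, R_0, \ldots, L_t, R_t, \{v\}\}$. For an eigenvector constant on each part, with value $x_i$ on $L_i$, $y_i$ on $R_i$, and $z$ on $v$, the eigenvalue equations read
\begin{equation*}
\lambda x_i = 2^{t-i} y_i + z, \qquad \lambda y_i = 2^i x_i, \qquad \lambda z = \sum_{i=0}^{t} 2^i x_i.
\end{equation*}
Eliminating $y_i$ gives $(\lambda^2 - 2^t) x_i = \lambda z$, so either (i) $z \neq 0$, all $x_i$ are equal, and substitution into the third equation forces $\lambda^2 = 3\cdot 2^t - 1$; or (ii) $\lambda^2 = 2^t$, $z = 0$, and the $x_i$ range over the $t$-dimensional subspace $\sum 2^i x_i = 0$; or (iii) $\lambda = 0$. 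This gives the quotient eigenvalues $\pm\sqrt{3\cdot 2^t - 1}$, $\pm\sqrt{2^t}$ (with multiplicity $t$ each), and $0$.

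It remains to control the eigenvectors orthogonal to the partition (those summing to zero on each part). Using the complete-bipartite structure, together with the fact that $\phi(v) = 0$ for any such $\phi$, one checks that $(A\phi)(u) = 0$ for every vertex $u$, so these eigenvectors all belong to $\lambda = 0$; a dimension count then shows they fill out the remaining $2^{t+2} - 2t - 4$ dimensions. Hence the distinct eigenvalues of $H$ are precisely $\pm\sqrt{3\cdot 2^t - 1}$, $\pm\sqrt{2^t}$, and $0$, which are five distinct values for $t \geq 1$ since $3\cdot 2^t - 1 \notin \{0, 2^t\}$. The main technical point will be the clean bookkeeping in the quotient analysis; once one observes that $(\lambda^2 - 2^t) x_i$ is forced to be independent of $i$, the dichotomy between the two nontrivial cases and the enumeration of nonzero eigenvalues falls out immediately.
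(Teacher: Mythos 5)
Your construction is the same as the paper's (and as Mohar's idea that the paper elaborates): a disjoint union of complete bipartite graphs, all with the same number of edges, plus one new vertex joined to every vertex of one colour class of each component. The difference is only in how the spectrum is verified. The paper first applies eigenvalue interlacing to see that at most seven distinct eigenvalues can occur, and then uses a rank argument (the new row of the bipartite incidence matrix is a sum of existing rows, so the rank, hence the multiplicity of $0$, is controlled) to kill the two potentially new nonzero eigenvalues; this is short but leaves the extremal eigenvalues $\theta_0=-\theta_v$ unspecified. You instead compute the full spectrum explicitly from the equitable partition $\{L_0,R_0,\dots,L_t,R_t,\{v\}\}$: your quotient equations, the resulting dichotomy $(\lambda^2-2^t)x_i=\lambda z$, the multiplicity count for $\pm\sqrt{2^t}$, the observation that every vector summing to zero on each part lies in the kernel of $A$, and the dimension bookkeeping are all correct (I checked that the total $2^{t+2}-1$ of vertices matches $2+2t$ nonzero eigenvalues plus the kernel). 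Your route is more computational but yields the exact spectrum $\{\pm\sqrt{3\cdot 2^t-1},\,\pm\sqrt{2^t},\,0\}$, including the spectral radius, and makes the five-ness of the eigenvalue set completely explicit; the paper's route is more conceptual and generalizes immediately to any family of pairwise distinct complete bipartite graphs with a common edge count. Using all $i=0,\dots,t$ (so some isomorphic component pairs) rather than the paper's non-isomorphic selection is harmless, since the valencies $2^i$ on the $R_i$ already give $t+1$ distinct values.
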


\begin{proof}
Consider the disjoint union of $f$ mutually non-isomorphic complete bipartite graphs with $e$ edges, with $f>1$. Then this graph has spectrum $$\{\sqrt{e}^{(f)},0^{(g)},-\sqrt{e}^{(f)}\},$$ where $v=g+2f$ is
the total number of vertices. If we now add a vertex and connect it in an arbitrary way to each of the components, so that the graph becomes connected, then the eigenvalues of the new graph and the eigenvalues of the original graph
interlace \cite{HaeInterlacing}, which implies that it has spectrum
$$\{\theta_0,\sqrt{e}^{(f-1)},\theta_f,0^{(g-1)},\theta_{v-f},-\sqrt{e}^{(f-1)},\theta_v\},$$ where we listed the
eigenvalues in non-increasing order. So there are at most seven distinct eigenvalues, but still the number of valencies is at least about $2f$. However, the number of eigenvalues becomes even smaller if we connect the new vertex to all
vertices of one color class of each of the bipartite components. In that case the graph remains bipartite, and
it is easy to see that the rank of the adjacency matrix of the new graph is the same as that of the original graph. Indeed, consider the incidence matrix between the bipartite color classes. Then the extra row that corresponds to the new vertex is the sum of the other distinct rows. This implies
that $0$ has multiplicity $g+1$, hence $\theta_f=\theta_{v-f}=0$, and the new graph only has five eigenvalues.
Note that $\theta_0=-\theta_v \neq \sqrt{e}$ because the graph is connected.

To prove the statement, we may take $t \geq 3$ without loss of generality. Now take, for example, $e=2^{t-1}$, and start from $f=\lfloor \frac{t-1}2 \rfloor +1$ complete bipartite graphs on $2^i+2^{t-1-i}$ vertices ($i=0,1,...,\lfloor \frac{t-1}2 \rfloor$), to obtain a graph with at least $t$ valencies.
\end{proof}

Is this the best we can do with bipartite graphs? Bipartite graphs with four eigenvalues are precisely the
incidence graphs of so-called uniform multiplicative designs, see \cite{vDS}. Examples of such graphs are
known with up to four distinct valencies. The smallest of such examples is on $28$ vertices and is constructed
from the Fano plane. Its spectrum is $\{\sqrt{72}^{(1)},\sqrt{2}^{(13)},-\sqrt{2}^{(13)},-\sqrt{72}^{(1)}\}$ and its
valencies are $3$, $4$, $10$, and $11$, each occurring seven times. This graph is actually part of an infinite family of bipartite graphs with four eigenvalues and four valencies that can be obtained from a construction of non-normal uniform multiplicative designs by Ryser \cite{Ryser}. No examples of bipartite graphs with four
eigenvalues and more than four valencies are currently known however. Note that there is a strong resemblance between graphs with three eigenvalues and bipartite graphs with four eigenvalues, see \cite{Dam3ev} and \cite{vDS}. The following problem resembles De Caen's problem on graphs with three eigenvalues.

\begin{problem} Are there connected bipartite graphs with four distinct eigenvalues and more than four distinct valencies ?
\end{problem}

Note that the above graphs with five eigenvalues can be interpreted as so-called coclique extensions of the spider
graph. The spider graph with $f$ legs is a disjoint union of $f$ $K_2$s plus an extra vertex that is adjacent to one vertex of each
$K_2$; it has spectrum $\{\sqrt{f+1}^{(1)},1^{(f-1)},0^{(1)},-1^{(f-1)},-\sqrt{f+1}^{(1)}\}$. Indeed, if we replace each vertex of this graph by several copies of this vertex, and let two copies be adjacent if and only if their originals are adjacent, then we obtain the required graph. In the adjacency matrix, this means that each zero is  replaced by a block of zeros and each one by a block of ones, in such a way that the rank of the matrix does not change. Hence this produces graphs with relatively small rank. It might in fact be fruitful to consider graphs with small rank and many vertices, as studied by Akbari, Cameron, and Khosrovshahi \cite{ACK} and Haemers and Peeters \cite{HP}.

\section{Non-bipartite graphs with five eigenvalues}

In the same spirit as above, we can construct non-bipartite graphs with five eigenvalues and arbitrarily many valencies.

\begin{theorem} For every integer $t \geq 1$, there is a connected non-bipartite graph with five distinct eigenvalues and exactly $t$ distinct valencies.
\end{theorem}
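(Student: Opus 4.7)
The plan is to adapt the construction of Theorem 2.1 to the non-bipartite setting. The underlying spectral mechanism is the same: begin with a disjoint union $H$ of components whose non-trivial eigenvalues coincide, then attach a single vertex (or very small subgraph) in a rank-respecting manner so that only a few new eigenvalues appear. In the bipartite version, the components were the complete bipartite graphs $K_{a_i,b_i}$ with $a_ib_i=e$ constant, contributing singular values all equal to $\sqrt{e}$; for the non-bipartite analogue, the components should still jointly have three distinct eigenvalues, but the attachment must create an odd cycle.

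A concrete candidate construction is the following: take $f$ disjoint cliques $K_{n+1}$, pick a ``port'' vertex $v_i$ in each, and attach a new vertex $c$ to $v_1,\dots,v_f$. The equitable partition $\{c\},\{v_i\}_{i=1}^f,\{K_{n+1}^{(i)}\setminus v_i\}_{i=1}^f$, combined with the permutation symmetry among the $f$ cliques, splits the spectral analysis into a $3\times 3$ symmetric quotient and $2\times 2$ antisymmetric blocks. The antisymmetric blocks give eigenvalues $n$ and $-1$ each of multiplicity $f-1$; internal antisymmetric eigenvectors inside each clique give $-1$ of multiplicity $f(n-1)$; and the symmetric quotient yields three roots of a cubic. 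For generic $f$ and $n$ these five are distinct, so the graph is connected, non-bipartite (each $K_{n+1}$ contains triangles), and has exactly five distinct eigenvalues with valencies $\{f,n,n+1\}$.

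To attain an arbitrary valency count $t\geq 1$ one adjusts the parameters. For $t=1$ one instead uses a regular non-bipartite graph with five eigenvalues, for instance a suitable distance-regular graph of diameter four. For $t\in\{2,3\}$ one chooses $f,n$ in the above construction so that $\{f,n,n+1\}$ has the required number of distinct values (e.g.\ $f=n$ or $f=n+1$ collapses two valencies). For $t\geq 4$ one must allow additional variation among the components---for instance, allowing cliques of several different sizes $n_i$, or replacing the ports $v_i$ by small attached subgraphs---but this naively introduces extra eigenvalues from the corresponding degree-$(f+2)$ polynomial in the quotient analysis.

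I expect the hardest step will be this last one: ensuring that, with enough variation among components to give $t$ distinct valencies, the polynomial determining the symmetric quotient eigenvalues still collapses to only three distinct roots, so that the total eigenvalue count remains five. This is the analogue of the constraint $a_ib_i=e$ in Theorem 2.1, which is precisely what forced the many singular values of the biadjacency matrix to coincide at $\sqrt{e}$. The task is to identify (for each $t$) a suitable parameterized family of components---possibly a mix of cliques and attached complete bipartite pieces---obeying a single spectral invariant that forces the desired collapse; the valency count is then guaranteed by choosing sufficiently many distinct parameter values in the family.
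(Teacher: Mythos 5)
Your construction, as far as you actually carry it out, only produces graphs with at most three distinct valencies (the cone vertex, the ports, and the non-ports), and you explicitly leave the case $t\geq 4$ --- which is the entire content of the theorem --- as an open ``task''. Your diagnosis of the obstruction is accurate: once the components are allowed to vary enough to create many valencies, the quotient of the equitable partition grows with the number of component types, and a vertex-attachment construction has no mechanism to collapse those extra quotient eigenvalues. So this is a genuine gap, not a deferred technicality: the key idea that makes the theorem work is missing.

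The idea you are missing is to connect the components by \emph{complementation} rather than by attaching a vertex. The paper takes $f$ mutually non-isomorphic complete bipartite graphs $K_{a_i,b_i}$ with $a_ib_i=e$ fixed (so the disjoint union has spectrum $\{\sqrt{e}^{(f)},0^{(g)},-\sqrt{e}^{(f)}\}$ while the part sizes, and hence the valencies of the complement, vary freely), and then passes to the complement $\G$. Since the complement's adjacency matrix is $J-I-A$ and $J$ has rank one, every eigenvalue $\theta$ of $A$ yields the eigenvalue $-1-\theta$ of $\G$ with multiplicity dropping by at most one; so no matter how heterogeneous the components are, the complement has eigenvalues $-1\pm\sqrt{e}$ (each with multiplicity $f-1$), $-1$ (with multiplicity $v-2f$, seen directly from the $2f$ distinct rows of $A+I$), and only \emph{two} further eigenvalues $\rho_{1,2}=-1+\frac12 v\pm\frac12\sqrt{v^2-4e(2f-1)}$ determined by the traces of $A$ and $A^2$ --- five distinct eigenvalues and about $2f$ valencies, with $f$ arbitrary. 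This rank-one perturbation is exactly the ``single spectral invariant that forces the desired collapse'' you were searching for; a growing equitable quotient never enters. (Your handling of the small cases $t\leq 3$ is fine in spirit and parallels the paper, which uses $H(4,3)$ for $t=1$ and the complement of $2K_{1,2}$ for $t=2$, though you would still need to check that your cubic quotient polynomial has three roots distinct from $n$ and $-1$.)
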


\begin{proof}
Consider again the disjoint union of $f$ mutually non-isomorphic complete bipartite graphs with $e$ edges, with $f>1$. Take its complement $\G$, which is clearly connected and which has $2f-1$ or $2f$ valencies. Let $A$ be the adjacency matrix of $\G$, then it is easy to see that $A+I$ has precisely $2f$ distinct rows, and these are linearly independent. Therefore $\G$ has eigenvalue $-1$ with multiplicity $v-2f$, where $v$ is the number of vertices. It is also not so hard to see that $\G$ has eigenvalues $\pm \sqrt{e} -1$, each with multiplicity at least $f-1$. Indeed, if a graph has eigenvalue $\theta$ with multiplicity $m$, then its complement has eigenvalue $-1-\theta$ with multiplicity at least $m-1$, because the eigenspace of $\theta$ intersects the orthogonal complement of the all-ones vector in a subspace of dimension $m-1$ or $m$, and the nonzero vectors in this intersection are easily seen to be eigenvectors of the complement of the graph. Thus, the spectrum of $\G$ is $\{\rho_1^{(1)},-1+\sqrt{e}^{(f-1)},\rho_2^{(1)},-1^{(v-2f)},-1-\sqrt{e}^{(f-1)}\}$, where $\rho_1$ is the spectral radius, and $\rho_2$ is the remaining eigenvalue. By considering that $\tr A=0$ and $\tr A^2=v(v-1)-2fe$ (twice the number of edges of $\G$), it follows that $\rho_1+\rho_2=v-2$ and $\rho_1^2+\rho_2^2=v^2-2v+2-2e(2f-1)$. Therefore $$\rho_{1,2}=-1+\frac12 v \pm \frac12 \sqrt{v^2-4e(2f-1)},$$ and it can be shown that these are distinct from the other three eigenvalues of $\G$ because $v>2f\sqrt{e}$; we omit the technical details. So $\G$ has a total of five distinct eigenvalues.

To prove the statement for $t \geq 3$ we can, for example, take $e=2^{t-1}$, and take the complement of the disjoint union of $f=\lfloor \frac{t-1}2 \rfloor +1$ complete bipartite graphs on $2^i+2^{t-1-i}$ vertices ($i=0,1,...,\lfloor \frac{t-1}2 \rfloor$), to get a graph with exactly $t$ valencies. To finish the proof, we need non-bipartite graphs with five eigenvalues that have one and two distinct valencies; examples are the Hamming graph $H(4,3)$ and the complement of $2K_{1,2}$, respectively.
\end{proof}

\section{Regular two-graphs}

In this section, we will use so-called regular two-graphs to construct (connected, non-bipartite) graphs with four eigenvalues. We will now recall some basics on two-graphs; for more details we refer to \cite{bh12, GoRobook, Seidel}.

Let $\G$ be a graph with adjacency matrix $A$. Its Seidel matrix $S= S(\G)$ is defined as $J-I-2A$.
Let $\Pi= \{U, W\}$ be a two-partition of the vertex set $V$ of $\G$. We say a graph --- denoted by $\G^{\Pi}$ --- with the same vertex set as $\G$ is obtained by Seidel switching $\G$ with respect to $\Pi$ if two distinct vertices $x$ and $y$ are adjacent in $\G^{\Pi}$
precisely if $x$ and $y$ are adjacent in $\G$ and either both are in $U$ or both are in $W$, or if they are not adjacent in $\G$ and one of them is in $U$ and the other one is in $W$. In other words, the edges and non-edges between $U$ and $W$ have been switched. It is well-known that the spectra of $S(\G)$ and $S(\G^{\Pi}$) are the same.
The switching class $[\G]$ of $\G$ is the set $$\{ \G^{\Pi} \mid \Pi \textrm{ is a two-partition of the vertex set of }\G,\textrm{ possible with one part empty}\}.$$ Note that switching induces an equivalence relation on graphs, with switching classes as equivalence classes. There is a one-to-one correspondence between switching classes of graphs and so-called two-graphs. For the sake of readability however, we will simply call the switching class a two-graph.

We say that the two-graph $[\G]$ is regular if the Seidel matrix $S(\G)$ has exactly two eigenvalues. Note that if the number of vertices of $\G$ is at least two, then the Seidel matrix $S(\G)$ has at least two eigenvalues. The regular two-graphs containing a complete graph or an empty graph are called trivial.

The graphs in regular two-graphs are examples of so-called strong graphs. We are going to use these strong graphs to show that there are connected graphs with many distinct valencies and exactly four distinct eigenvalues.

In the following, we consider a graph $\G$ in a regular two-graph with $v$ vertices. Let the Seidel matrix of $\G$ have distinct eigenvalues $-1-2\sigma$ and $-1-2\tau$, with respective multiplicities $m_{\sigma}$ and $m_{\tau}$.
First we will derive some more basic properties of $\G$.

\begin{lemma}\label{lemedge}
Let $\G$ be a graph in a regular two-graph with $v$ vertices and Seidel eigenvalues $-1-2\sigma$ and $-1-2\tau$, with respective multiplicities $m_{\sigma}$ and $m_{\tau}$. If $\G$ has $e$ edges, then the (adjacency) spectrum of $\G$ is  $\{\rho_1^{(1)},\rho_2^{(1)},\sigma^{(m_{\sigma}-1)},\tau^{(m_{\tau}-1)}\}$, where $\rho_1$ and $\rho_2$ are not necessarily distinct from each other or $\sigma$ or $\tau$, and the following equations hold:

\begin{align}\label{basicequations}
&m_{\sigma}+m_{\tau}=v,\notag \\
&m_{\sigma}\sigma+m_{\tau}\tau=-v/2,\notag \\
&m_{\sigma}\sigma^2+m_{\tau}\tau^2=v^2/4,\\
&\rho_1+\rho_2=\sigma+\tau+v/2=-2\sigma\tau,\notag\\
&\rho_1^2+\rho_2^2=\sigma^2+\tau^2+2e-v^2/4\notag.
\end{align}
\end{lemma}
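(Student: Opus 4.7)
The plan is to translate eigenvalue information from the Seidel matrix to the adjacency matrix using $S = J - I - 2A$, i.e.\ $A = \tfrac12(J - I - S)$.

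First, I would establish the three equations about $\sigma,\tau,m_\sigma,m_\tau,v$ purely from the trace identities of $S$. The identity $m_\sigma + m_\tau = v$ is immediate from the rank/dimension count. Since $S$ has zero diagonal, $\tr S = 0$, giving $m_\sigma(-1-2\sigma) + m_\tau(-1-2\tau) = 0$, and together with $m_\sigma+m_\tau=v$ this reduces to $m_\sigma\sigma + m_\tau\tau = -v/2$. Since every off-diagonal entry of $S$ is $\pm 1$, we get $\tr S^2 = v(v-1)$; expanding $m_\sigma(-1-2\sigma)^2 + m_\tau(-1-2\tau)^2$ and substituting the previous two identities yields $m_\sigma\sigma^2 + m_\tau\tau^2 = v^2/4$.

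Next, I would locate the adjacency eigenspaces. Let $V_\sigma, V_\tau$ be the Seidel eigenspaces. For any vector $x \in V_\sigma$ with $x \perp \mathbf{1}$, we have $Jx=0$ and $Sx=(-1-2\sigma)x$, so
\[
Ax \;=\; \tfrac12(Jx - x - Sx) \;=\; \tfrac12\bigl(0 - x - (-1-2\sigma)x\bigr) \;=\; \sigma x,
\]
and analogously on $V_\tau \cap \mathbf{1}^\perp$. Since $\mathbf{1}^\perp$ has codimension $1$, the intersections $V_\sigma \cap \mathbf{1}^\perp$ and $V_\tau \cap \mathbf{1}^\perp$ have dimensions at least $m_\sigma - 1$ and $m_\tau - 1$. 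Thus $\sigma$ and $\tau$ are adjacency eigenvalues with multiplicities at least $m_\sigma - 1$ and $m_\tau - 1$, accounting for at least $v-2$ of the $v$ eigenvalues of $A$; the remaining two (call them $\rho_1,\rho_2$) form the rest of the adjacency spectrum.

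Finally, I would pin down $\rho_1,\rho_2$ by trace: $\tr A = 0$ gives $\rho_1+\rho_2 = -(m_\sigma-1)\sigma - (m_\tau-1)\tau = -(m_\sigma\sigma+m_\tau\tau) + \sigma + \tau = v/2 + \sigma + \tau$, proving the first half of the fourth identity. For the second half, the claim $\sigma + \tau + v/2 = -2\sigma\tau$ is a purely algebraic consequence of the three basic equations; substituting $m_\tau = v - m_\sigma$ into the second equation gives $m_\sigma = -v(1+2\tau)/(2(\sigma-\tau))$ (and symmetrically for $m_\tau$), and inserting these into $m_\sigma\sigma^2 + m_\tau\tau^2 = v^2/4$ and factoring out $\sigma-\tau$ yields exactly $v/2 = -\sigma - \tau - 2\sigma\tau$. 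For the last identity, I use $\tr A^2 = 2e$:
\[
\rho_1^2 + \rho_2^2 = 2e - (m_\sigma-1)\sigma^2 - (m_\tau-1)\tau^2 = 2e - v^2/4 + \sigma^2 + \tau^2.
\]

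The only mild obstacle is the identity $\sigma+\tau+v/2 = -2\sigma\tau$, which requires combining all three basic equations rather than any two; the rest is a straightforward bookkeeping of eigenspaces.
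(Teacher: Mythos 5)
Your proof is correct and follows essentially the same route as the paper: locate $\sigma$ and $\tau$ as adjacency eigenvalues via the intersections of the Seidel eigenspaces with $\mathbf{1}^\perp$, then extract all five equations from $\tr S=0$, $\tr S^2=v(v-1)$, $\tr A=0$, and $\tr A^2=2e$. The only (harmless) divergence is in the identity $\sigma+\tau+v/2=-2\sigma\tau$: you obtain it algebraically from the three trace equations, while the paper reads it off from $(S+(2\sigma+1)I)(S+(2\tau+1)I)=0$ together with the constant diagonal of $S^2$, which gives the equivalent fact $v-1=-(2\sigma+1)(2\tau+1)$.
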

\begin{proof}
The adjacency matrix $A=\frac12(J-I-S)$ of $\G$ has eigenvalue $\sigma$ with multiplicity at least $m_{\sigma}-1$ and eigenvalue $\tau$ with multiplicity at least $m_{\tau}-1$ because, similar as before, the eigenspaces of $S$ intersect the orthogonal complement of the all-ones vector in spaces of dimension at least $m_{\tau}-1$ and $m_{\sigma}-1$.
So we have two unknown eigenvalues, say $\rho_1$ and $\rho_2$. But the sum of the eigenvalues ($\tr A$) equals zero and the sum of squares of the eigenvalues ($\tr A^2$) equals twice the number of edges. The given equations follow from these sums, and from using that $\tr S =0$ and $\tr S^2=v(v-1)$. We also use the well-known fact that $v-1=-(2\sigma+1)(2\tau+1)$, which follows from the equation
\begin{equation}\label{quadraticeq}(S+(2\sigma+1)I)(S+(2\tau+1)I)=0,
\end{equation}
and the fact that the diagonal entries of $S^2$ are all $v-1$.
\end{proof}

It follows in particular that, within the switching class $[\G]$, the spectrum of the graph is determined by the number of edges $e$. Because $e \leq {v \choose 2}$ and the number of graphs in $[\G]$ is $2^{v-1}$, there are many graphs in the switching class that have the same spectrum. It is unclear, however, how many of these graphs are non-isomorphic. We will address this point in more detail after Theorem \ref{thm:9}.

We remark that for non-trivial regular two-graphs, the eigenvalues $\sigma$ and $\tau$ cannot be $0$ or $-1$, and the multiplicities $m_{\sigma}$ and $m_{\tau}$ are larger than $1$ (cf.~\cite[Thm.~6.6]{Seidel}). We also note that if $\G$ is regular, then all eigenvectors of $S$ are also eigenvectors of $A$, and it follows that $\G$ has at most three distinct eigenvalues, so if the regular two-graph is non-trivial, then $\G$ is strongly regular. In the following, we will show that if $\G$ is non-regular, then $\G$ has four distinct eigenvalues. We will also show that $\G$ cannot be bipartite, but first, we will characterize the case that $\G$ is disconnected.

We note that by switching in any graph $\G$ with respect to $\Pi=(U,V \setminus U)$, where $U$ is the set of neighbors in $\G$ of a given vertex $u$, one can always isolate $u$, in the sense that it has no neighbors, and hence the graph  $\G^{\Pi}$ is disconnected. If the switching class is a regular two-graph, then it is well-known that $\G^{\Pi}$ is the disjoint union of a vertex and a connected strongly regular graph with parameters $(v,k,\lambda,\mu)$ with $k=2\mu$, see \cite{bh12, GoRobook}. In fact, regular two-graphs are characterized by this property. In the following proposition, we will extend this result, in the sense that we will show that there can be no other disconnected graphs in the switching class.

\begin{prop}\label{disco}
Let $\G$ be a disconnected graph in a non-trivial regular two-graph, with Seidel eigenvalues $-1-2\sigma$ and $-1-2\tau$. Then $\G$ is the disjoint union of an isolated vertex and a connected strongly regular graph with parameters $(-(2\sigma+1)(2\tau +1), -2\sigma\tau, \sigma + \tau -\sigma\tau, -\sigma\tau )$.
\end{prop}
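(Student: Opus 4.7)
My plan is to exploit the quadratic equation $(S+(2\sigma+1)I)(S+(2\tau+1)I)=0$ from \eqref{quadraticeq} to extract local structural information about $\G$, and then argue by a short case analysis on the components of $\G$, using the hypothesis that $\sigma,\tau\notin\{0,-1\}$ repeatedly.

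First I would expand $(S^2)_{uw}$ for $u\neq w$ in two different ways. The quadratic relation gives $(S^2)_{uw}=-2(\sigma+\tau+1)S_{uw}$, while a direct entrywise computation in terms of the valencies $d_u,d_w$ and the number $a_{uw}$ of common neighbours of $u$ and $w$, combined with $v-1=-(2\sigma+1)(2\tau+1)$ from Lemma~\ref{lemedge}, yields
\[
d_u+d_w-2a_{uw}=\begin{cases} v/2-\sigma-\tau, & u,w \text{ adjacent},\\ v/2+\sigma+\tau=-2\sigma\tau, & u,w \text{ non-adjacent.}\end{cases}
\]

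Now suppose $\G$ has components $\G_1,\dots,\G_p$ with $p\ge 2$. For $u\in\G_i$ and $w\in\G_j$ with $i\neq j$ we have $a_{uw}=0$, so $d_u+d_w=-2\sigma\tau$. Applied to all cross-pairs this forces every $\G_i$ to be regular of some degree $k_i$ with $k_i+k_j=-2\sigma\tau$ whenever $i\neq j$. Feeding the adjacent and non-adjacent identities back into each component then shows that each $\G_i$ is strongly regular (possibly degenerate) with $\mu_i=k_i+\sigma\tau$ and $\lambda_i=\mu_i+\sigma+\tau$. Moreover, every eigenvalue of the adjacency matrix of $\G_i$ on the orthogonal complement of $\mathbf{1}_{v_i}$ trivially extends to a Seidel eigenvector of $\G$, so lies in $\{\sigma,\tau\}$; since $\sigma,\tau\notin\{0,-1\}$, this rules out $\G_i$ being a complete graph or empty graph on at least two vertices (which would contribute the forbidden Seidel eigenvalues $1$ or $-1$). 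Hence each $\G_i$ is either a single vertex or a non-trivial connected strongly regular graph with $\mu_i\ge 1$.

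Finally I would handle the small number of remaining cases. If $p\ge 3$, then the equations $k_i+k_j=-2\sigma\tau$ force all $k_i$ equal to $-\sigma\tau$, hence $\mu_i=0$, so every component is a clique, which the previous restriction forbids. If $p=2$ and both components were non-trivial strongly regular, then $\mu_i\ge 1$ would give $k_1+k_2\ge 2-2\sigma\tau$, contradicting $k_1+k_2=-2\sigma\tau$. Thus exactly one component is a single vertex $K_1$ and, substituting $k_1=0$ and $v_2=v-1=-(2\sigma+1)(2\tau+1)$ into the formulas above, the other component is a connected strongly regular graph with the asserted parameters $(-(2\sigma+1)(2\tau+1),\,-2\sigma\tau,\,\sigma+\tau-\sigma\tau,\,-\sigma\tau)$. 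The only subtle point is the careful bookkeeping to exclude degenerate strongly regular components; fortunately each such case is immediately closed off by the assumption $\sigma,\tau\notin\{0,-1\}$.
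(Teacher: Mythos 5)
Your proof is correct, and it takes a genuinely different route from the paper's. Both arguments start from the quadratic relation \eqref{quadraticeq} and both close with essentially the same contradiction ($\mu=k+\sigma\tau$ cannot be positive for the component of smaller valency), but the middle parts diverge. The paper works globally and spectrally: it splits $S$ into blocks, reads off from the block equation that the two parts $\G_1,\G_2$ are regular, locates the valencies $k_1,k_2$ inside the known adjacency spectrum $\{\rho_1^{(1)},\rho_2^{(1)},\sigma^{(m_\sigma-1)},\tau^{(m_\tau-1)}\}$ of Lemma \ref{lemedge}, and uses $\rho_i\le k_i$ together with $k_1+k_2=\rho_1+\rho_2$ to force $k_i=\rho_i$, so that each part is a connected regular graph with at most three eigenvalues, hence strongly regular. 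You instead expand \eqref{quadraticeq} entrywise to obtain the ``strong graph'' identities $d_u+d_w-2a_{uw}=v/2\mp(\sigma+\tau)$ (upper sign for adjacent pairs), from which regularity of the components and the explicit values $\mu_i=k_i+\sigma\tau$, $\lambda_i=\mu_i+\sigma+\tau$ fall out locally, the Seidel eigenvalues entering only to forbid clique components via the zero-extension of eigenvectors orthogonal to the all-ones vector. This is essentially the ``more combinatorial'' proof the authors allude to right after the proposition; it bypasses $\rho_1,\rho_2$ and Lemma \ref{lemedge} (except for $\sigma+\tau+v/2=-2\sigma\tau$) and treats an arbitrary number $p$ of components directly, whereas the paper instead proves that the complement of one component must itself be connected. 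One micro-omission: in the case $p=2$ you should also rule out both components being single vertices, since that would give $k_1+k_2=0=-2\sigma\tau$ and hence $\sigma\tau=0$, contradicting non-triviality --- a one-line fix.
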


\begin{proof} Consider one of the connected components on the set, say, $V_1$ of $v_1$ vertices, and let $V_2$ be the set of remaining $v_2=v-v_1$ vertices. Then the Seidel matrix $S$ partitions accordingly as
$$S=\begin{bmatrix} S_1 & J\\ J^{\top} & S_2 \end{bmatrix},$$ where $J$ is the $v_1 \times v_2$ all-ones matrix. Since the Seidel matrix satisfies \eqref{quadraticeq},
it follows by considering the upper right block that
\begin{equation}\label{linesums}
S_1J+JS_2+(2\sigma+2\tau+2)J=0.
\end{equation}
By considering a column of this equation, it follows that $S_1$ has constant row sum, say $c_1$. Similarly, it follows that $S_2$ has constant column sum, say $c_2$. Clearly, this means that the respective induced graph $\G_i$ is regular with valency $k_i=(v_i-1-c_i)/2$, for $i=1,2$, respectively. By \eqref{basicequations} and \eqref{linesums}, it follows that $k_1+k_2=(v-2-c_1-c_2)/2=v/2+\sigma+\tau=\rho_1+\rho_2$. Note also that $k_1$ and $k_2$ are eigenvalues of $G$, so they are contained in the spectrum $\{\rho_1^{(1)},\rho_2^{(1)},\sigma^{(m_{\sigma}-1)},\tau^{(m_{\tau}-1)}\}$.  Moreover, we may assume without loss of generality that $\rho_1$ is an eigenvalue of $\G_1$ and $\rho_2$ is an eigenvalue of $\G_2$, for otherwise the `component' not containing either of them has at most two distinct eigenvalues ($\sigma$ and $\tau$), but these are not $0$ or $-1$, which is a contradiction. So both $\G_1$ and $\G_2$ are regular graphs with at most three eigenvalues. For the same reason, $\G_2$ must be connected (recall that we already assumed that $\G_1$ is connected), for otherwise it would have a connected component with at most two distinct eigenvalues $\sigma$ and $\tau$. Because $\G_i$ is $k_i$-regular, it follows that it has spectral radius $k_i$, and hence $\rho_i \leq k_i$, for $i=1,2$. From the fact that $k_1+k_2=\rho_1+\rho_2$, it now follows that $k_1=\rho_1$ and $k_2=\rho_2$. Thus, $\G_i$ is a connected $\rho_i$-regular graph with at most three distinct eigenvalues, for $i=1,2$. However, because $-1$ is not an eigenvalue, neither component can be a clique with at least two vertices. Now two cases remain.

First, if one of the two components, say $\G_2$, is an isolated vertex, then $\rho_2=k_2=0$, and $\G_1$ is a connected strongly regular graph with $\rho_1=k_1=-2\sigma\tau$ by \eqref{basicequations}. Now denote the parameters of $\G_1$ by $(v_1,k_1,\lambda_1,\mu_1)$. Then $v_1=v-1=-(2\sigma+1)(2\tau+1)$, $\mu_1=k_1+\sigma\tau=-\sigma\tau$, and $\lambda_1=\sigma+\tau+\mu_1= \sigma+\tau -\sigma\tau$.

Secondly the case remains that both components are connected strongly regular graphs. Assume without loss of generality that $\G_2$ has the smallest valency of the two components, and let it have parameters $(v_2,k_2,\lambda_2,\mu_2)$. Because $k_1+k_2=-2\sigma\tau$ by \eqref{basicequations}, it follows that $k_2\leq -\sigma\tau$. But then $\mu_2=k_2+\sigma\tau \leq 0$, and so $\G_2$ is disconnected, which is a contradiction. Thus, this final case cannot occur.
\end{proof}

We note that an alternative, more combinatorial, proof of this result is possible if one uses the above mentioned correspondence between regular two-graphs and strongly regular graphs. The given proof, however, is self-contained, and moreover establishes this correspondence.

Another consequence of the correspondence to strongly regular graphs is that $\sigma$ and $\tau$ are integers, except (possibly) if $m_{\sigma}=m_{\tau}$, in which case $\sigma$ and $\tau$ are equal to $-\frac12\pm\frac12\sqrt{v-1}$.

\begin{prop}\label{prop3ev}
Let $\G$ be a graph with at most three distinct eigenvalues in a non-trivial regular two-graph, with Seidel eigenvalues $-1-2\sigma$ and $-1-2\tau$.
Then $\G$ is strongly regular with parameters $(-(2\sigma+1)(2\tau +1) +1,-\tau(2\sigma +1), \sigma(1-\tau), -\tau(\sigma + 1))$ or $(-(2\sigma+1)(2\tau +1) +1,-\sigma(2\tau +1), \tau(1-\sigma), -\sigma(\tau + 1))$.
\end{prop}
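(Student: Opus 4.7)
The plan is to derive one clean matrix identity that ties $A$ to the valency vector $d$ of $\G$, and then to use rank and minimal-polynomial considerations to force $\G$ to be regular. Substituting $S = J - I - 2A$ into \eqref{quadraticeq}, using $JA = \mathbf{1}d^{\top}$, $AJ = d\mathbf{1}^{\top}$ and $v = 1-(2\sigma+1)(2\tau+1)$, every term not involving $d$ should collapse nicely, leaving
\begin{equation*}
(A - \sigma I)(A - \tau I) \;=\; \sigma\tau J + \tfrac12\bigl(\mathbf{1}d^{\top} + d\mathbf{1}^{\top}\bigr).
\end{equation*}
This expansion is the only genuinely computational step, and is the main (if mild) technical obstacle.

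The right-hand side equals $B M B^{\top}$ with $B = [\mathbf{1}\ d]$ and $M = \left(\begin{smallmatrix}\sigma\tau & 1/2 \\ 1/2 & 0\end{smallmatrix}\right)$; since $\det M = -1/4 \neq 0$, the right-hand side has rank exactly $2$ precisely when $\mathbf{1}$ and $d$ are linearly independent, i.e., when $\G$ is not regular. By Lemma \ref{lemedge} the eigenvalues of $(A-\sigma I)(A-\tau I)$ are $(\rho_i - \sigma)(\rho_i - \tau)$ for $i = 1, 2$ together with $0$, so rank $2$ means $\rho_1, \rho_2 \notin \{\sigma, \tau\}$. Combined with the hypothesis of at most three distinct eigenvalues this forces $\rho_1 = \rho_2 =: \rho$, and then $\rho = -\sigma\tau$ by the trace formula of Lemma \ref{lemedge}.

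Under this hypothesis $A$ satisfies $(A - \sigma I)(A - \tau I)(A - \rho I) = 0$, so multiplying the identity on the right by $A - \rho I$ yields
\begin{equation*}
\sigma\tau\,(A-\rho I)\mathbf{1}\,\mathbf{1}^{\top} + \tfrac12(A-\rho I)\mathbf{1}\,d^{\top} + \tfrac12(A-\rho I)d\,\mathbf{1}^{\top} \;=\; 0.
\end{equation*}
Picking $x$ with $\mathbf{1}^{\top}x = 0$ and $d^{\top}x = 1$ (possible because $\mathbf{1}$ and $d$ are independent) and applying this equation to $x$ gives $(A-\rho I)\mathbf{1} = 0$, i.e., $d = A\mathbf{1} = \rho\mathbf{1}$, contradicting non-regularity. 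Hence $\G$ must be regular, and the remark just before the proposition (regular in a non-trivial regular two-graph implies strongly regular) finishes that step.

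The parameters are a short computation. Regularity makes $\mathbf{1}$ a Seidel eigenvector, so $v - 1 - 2k \in \{-1 - 2\sigma,\, -1 - 2\tau\}$; combined with $v = 1 - (2\sigma+1)(2\tau+1)$ this gives $k \in \{-\tau(2\sigma+1),\, -\sigma(2\tau+1)\}$. The standard strongly regular relations $\mu = k + \sigma\tau$ and $\lambda = \mu + \sigma + \tau$ for a graph with eigenvalues $k, \sigma, \tau$ then give precisely the two parameter tuples stated.
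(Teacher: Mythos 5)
Your proof is correct, but it takes a genuinely different route from the paper's. The paper first invokes Proposition~\ref{disco} to get connectivity (so the spectral radius is simple), then splits into the cases $\rho_2=\sigma$ and $\rho_2=\rho_1$: in the first it computes $2e=\tr A^2=v\rho_1$ from \eqref{basicequations} and concludes regularity because the spectral radius equals the average valency only for regular graphs; the second case dies by an inequality. You instead rewrite \eqref{quadraticeq} as the identity $(A-\sigma I)(A-\tau I)=\sigma\tau J+\tfrac12(\mathbf{1}d^{\top}+d\mathbf{1}^{\top})$ (which I checked: the coefficient of $J$ is $-\tfrac14(v+2\sigma+2\tau)=\sigma\tau$ by \eqref{basicequations}), read off from the rank-two factorization $BMB^{\top}$ that non-regularity forces $\rho_1,\rho_2\notin\{\sigma,\tau\}$, hence $\rho_1=\rho_2=-\sigma\tau$ under the three-eigenvalue hypothesis, and then kill this case by hitting the identity with $A-\rho I$ and a suitable test vector. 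This is airtight: the step $\rho_1=\rho_2$ does need that $\sigma\neq\tau$ both occur as eigenvalues of $A$, which holds since $m_\sigma,m_\tau>1$ for non-trivial regular two-graphs, as the paper notes. What your route buys is independence from the connectivity result and from any case analysis, plus a reusable structural identity (essentially the defining identity of a strong graph in terms of $A$ and the valency vector); what the paper's route buys is brevity given the machinery already established. Two cosmetic points: you say you multiply on the right by $A-\rho I$ but write the left-multiplied form --- harmless, since the left-hand factors commute and the equation is the transpose of the one you'd get on the right --- and you should carry out the expansion you flag as the "main technical obstacle" rather than asserting it, though it is indeed routine. The parameter computation at the end matches the paper's two parameter tuples exactly.
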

\begin{proof}
By the previous proposition, $\G$ must be connected, so the spectral radius has multiplicity one. Consider the spectrum $\{\rho_1^{(1)},\rho_2^{(1)},\sigma^{(m_{\sigma}-1)},\tau^{(m_{\tau}-1)}\}$ of $\G$, see Lemma \ref{lemedge}. By the assumption that $\G$ has at most three distinct eigenvalues, we may assume without loss of generality that $\rho_2=\sigma$ or $\rho_2=\rho_1$.

Suppose first that $\rho_2=\sigma$. Then $\rho_1=\tau+v/2$ by \eqref{basicequations}. Because $m_{\sigma}>1$ and $\rho_1>\tau$, it follows that $\rho_1$ must be the spectral radius. Because $2e= \tr A^2 =\rho_1^2+m_{\sigma} \sigma^2 +(m_{\tau}-1)\tau^2=\rho_1^2+v^2/4-(\rho_1-v/2)^2=v\rho_1$, where we used \eqref{basicequations}, it now follows that $\G$ is regular. Thus, $\G$ is strongly regular, and its parameters $(-(2\sigma+1)(2\tau +1) +1,-\sigma(2\tau +1), \tau(1-\sigma), -\sigma(\tau + 1))$ follow in a straightforward manner. By interchanging the role of $\tau$ and $\sigma$ we obtain the other parameter sets in the statement of the proposition.

Finally, suppose that $\rho_2=\rho_1$. Then $\rho_1=\rho_2=-\sigma\tau$ by \eqref{basicequations}. In this case, assume (without loss of generality) that $\tau>\sigma$. Then $\tau$ must be the spectral radius, and $\sigma<-1$. But then $\rho_1=-\sigma\tau>\tau$, which is a contradiction.
\end{proof}

\begin{prop}\label{bipartite}
Let $\G$ be a graph in a non-trivial regular two-graph. Then $\G$ is not bipartite.
\end{prop}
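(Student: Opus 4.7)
My plan is to argue by contradiction: assume $\Gamma$ is bipartite, so its adjacency spectrum is closed under $\lambda \mapsto -\lambda$ with matched multiplicities.

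I would first dispatch the case where $\Gamma$ has at most three distinct eigenvalues. By Proposition~\ref{prop3ev}, $\Gamma$ would then be strongly regular with one of the two stated parameter sets. Since the only connected bipartite strongly regular graph is $K_{n,n}$ (requiring $\lambda = 0$ and $k = \mu$), substituting these into either parameter set forces $\sigma = 0$ or $\tau = 0$, contradicting non-triviality.

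So $\Gamma$ has exactly four distinct eigenvalues $\rho_1, \rho_2, \sigma, \tau$ with multiplicities $1, 1, m_\sigma - 1, m_\tau - 1$. For the multiplicity multiset $\{1, 1, m_\sigma - 1, m_\tau - 1\}$ to split into two matched pairs under the bipartite symmetry, one must have $m_\sigma = m_\tau$. If $m_\sigma = m_\tau > 2$, the pairing is forced to be $\{\rho_1, \rho_2\}$ with $\{\sigma, \tau\}$, giving $\sigma = -\tau$; but the remark after Proposition~\ref{prop3ev} gives $\sigma + \tau = -1 \neq 0$, a contradiction.

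The remaining case $m_\sigma = m_\tau = 2$ (so $v = 4$) is the main technical hurdle. Here the remark forces $\sigma, \tau = (-1 \pm \sqrt{3})/2$, and so~\eqref{quadraticeq} reduces to $S^2 = 3I$. But $S$ is a $4 \times 4$ symmetric $\{0, \pm 1\}$-matrix with zero diagonal, and the off-diagonal relations $(S^2)_{ij} = 0$ for $i \neq j$ yield, after a short computation on the six off-diagonal entries, an impossibility (equivalently, no symmetric conference matrix of order $4$ exists). This completes the contradiction.
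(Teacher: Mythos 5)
Your proof is correct and follows essentially the same route as the paper: reduce to a connected graph with four distinct eigenvalues, use the symmetry of the bipartite spectrum together with the multiplicities from Lemma~\ref{lemedge} and equations~\eqref{basicequations} to force $m_\sigma=m_\tau=2$ and hence $v=4$, and then rule out that case. The only substantive difference is at the very end, where the paper simply invokes the fact that the only regular two-graphs on four vertices are trivial, while you verify this directly by showing that no symmetric conference matrix of order $4$ exists (i.e., that $S^2=3I$ is impossible); your intermediate contradiction via $\sigma+\tau=-1$ is just a minor variant of the paper's use of $\rho_1+\rho_2=\sigma+\tau+v/2$.
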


\begin{proof} Suppose that $\G$ is bipartite. By the previous two propositions it follows that $\G$ is connected with four distinct eigenvalues. From the bipartiteness, we have that its spectrum $\{\rho_1^{(1)},\rho_2^{(1)},\sigma^{(m_{\sigma}-1)},\tau^{(m_{\tau}-1)}\}$ is symmetric about $0$.  If $m_{\sigma}>2$ or $m_{\tau}>2$, then it follows that $\sigma=-\tau$ and $\rho_1=-\rho_2$, but then the equation $\rho_1+\rho_2=\sigma+\tau+v/2$ from \eqref{basicequations} gives a contradiction. So $m_{\sigma}= 2$ and $m_{\tau}= 2$, and hence $v=4$. However, the only regular two-graphs on four vertices are the trivial ones.
\end{proof}

Besides being of general interest, Propositions \ref{disco}-\ref{bipartite} allow us to conclude that if we find graphs in regular two-graphs with more than two valencies, then they are connected, non-bipartite, and have four distinct eigenvalues. In the next section we will indeed construct such graphs, with arbitrarily many valencies.

\section{Graphs with four eigenvalues}

Let $r$ be a positive integer.
Let $\langle \cdot, \cdot \rangle$ denote a non-degenerate symplectic bilinear form on $V= $ GF$(2)^{2r}$.
Let $\G$ be the graph with vertex set $V$ and $u \sim v$ if $\langle u,v \rangle \neq 0$. The switching class $[\G]$ is known as the symplectic two-graph; and it is regular with $\sigma,\tau=\pm 2^{r-1}$. It is clear that $\G$ has $0$ as an isolated vertex. The other component of $\G$ is known as the symplectic graph Sp$(2r)$, which is a strongly regular graph with parameters $(2^{2r}-1,2^{2r-1},2^{2r-2},2^{2r-2})$ according to Proposition \ref{disco}, see also \cite[Lemma 10.12.1]{GoRobook}.

We will use the fact that the graph Sp$(2r)$ has every graph on at most $2r-1$ vertices as an induced subgraph, a result shown by Vu \cite{vu}, see also \cite[Thm.~8.11.2]{GoRobook}.
\begin{theorem}\label{thm:symplectic}
Let $t \geq 3$, and $\Delta$ be a graph on $n$ vertices with $t$ distinct valencies. Then there exists a connected graph on at most $2^{n+2}$ vertices with four distinct eigenvalues and at least $t$ distinct valencies, having $\Delta$ as an induced subgraph.
\end{theorem}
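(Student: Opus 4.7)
The plan is to realize $\Delta$ as an induced subgraph of a graph in the symplectic two-graph on $2^{2r}$ vertices, and then perform a single Seidel switch that preserves this induced subgraph while forcing the vertices of $\Delta$ to inherit at least $t$ distinct valencies. Choose the least $r$ with $2r-1\geq n$, so that $r=\lceil (n+1)/2\rceil$ and $2^{2r}\leq 2^{n+2}$; since $n\geq t\geq 3$ gives $r\geq 2$, the symplectic two-graph is non-trivial. By Vu's embedding theorem cited above, $\Delta$ occurs as an induced subgraph of $\mathrm{Sp}(2r)$. Let $\G_0=\mathrm{Sp}(2r)\cup\{v_0\}$ be the distinguished representative of the symplectic two-graph having an isolated vertex $v_0$, so that $\G_0$ contains $\Delta$ as an induced subgraph.

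Now form $\G'=\G_0^{\Pi}$ where $\Pi=(V(\Delta),\,V(\G_0)\setminus V(\Delta))$. Seidel switching with respect to $\Pi$ leaves the induced subgraphs on each part of $\Pi$ untouched, so $\Delta$ is still an induced subgraph of $\G'$. Since $\mathrm{Sp}(2r)$ is $2^{2r-1}$-regular, every $x\in V(\Delta)$ has $2^{2r-1}-d_{\Delta}(x)$ neighbors outside $V(\Delta)$ in $\G_0$; after switching this count is replaced by the number of non-neighbors outside, namely
\[
(2^{2r}-n)-(2^{2r-1}-d_{\Delta}(x))=2^{2r-1}-n+d_{\Delta}(x).
\]
Adding the unchanged $d_{\Delta}(x)$ neighbors inside $V(\Delta)$, the valency of $x$ in $\G'$ equals $2d_{\Delta}(x)+2^{2r-1}-n$. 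This is an injective affine function of $d_{\Delta}(x)$, so the $t$ distinct valencies of $\Delta$ translate into at least $t$ distinct valencies of $\G'$.

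To conclude, note that $\G'$ lies in a non-trivial regular two-graph, so Lemma~\ref{lemedge} gives at most four distinct eigenvalues for $\G'$. Proposition~\ref{disco} then forces $\G'$ to be connected, since the disconnected option has only the two valencies $0$ and $-2\sigma\tau$ while $\G'$ has at least $t\geq 3$. Proposition~\ref{prop3ev} rules out having three or fewer eigenvalues, since $\G'$ is not regular. Hence $\G'$ is connected, has exactly four distinct eigenvalues, contains $\Delta$ as an induced subgraph, and has $2^{2r}\leq 2^{n+2}$ vertices, as required.

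The only delicate step is the degree computation, together with the worry that switching might destroy $\Delta$ as an induced subgraph or collapse its valencies. Both worries disappear because the switch is taken with respect to the part $V(\Delta)$ itself: this leaves $\Delta$ intact on its side of $\Pi$, and the external-degree update is the same affine function of $d_{\Delta}$ at every vertex of $V(\Delta)$. Everything else is structural and is supplied by Vu's embedding theorem together with the characterization results already proved earlier in this section.
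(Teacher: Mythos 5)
Your proposal is correct and follows essentially the same route as the paper: embed $\Delta$ into $\mathrm{Sp}(2r)$ with $r=\lfloor n/2\rfloor+1$ via Vu's theorem, switch with respect to $(V(\Delta),V\setminus V(\Delta))$, observe that the new valencies are an injective affine function of the $\Delta$-valencies, and invoke Lemma~\ref{lemedge} and Propositions~\ref{disco} and~\ref{prop3ev}. Your valency formula $2d_{\Delta}(x)+2^{2r-1}-n$ is in fact the more careful version of the paper's expression (which drops the $-n$ term), and this makes no difference to the argument.
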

\begin{proof}
Let $r = \lfloor \frac{n}{2} \rfloor+1$ and consider the symplectic two-graph $[\G]$ as described above. Then $\Delta$ is an induced subgraph of the component Sp$(2r)$ of $\G$, say on the vertex set $U$. Now switch $\G$ with respect to $\Pi=(U,V \setminus U)$. If a vertex $u\in U$ has valency $d_u$ in $\Delta$, then in $\G$ it has $2^{2r-1}-d_u$ neighbors in $V \setminus U$, and hence it follows that $u$ has valency $2^{2r}-2^{2r-1}+2d_u$ in $\G^{\Pi}$. Thus the resulting graph $\G^{\Pi}$ has at least $t$ valencies, and as the switching class of $\Gamma$ is a regular two-graph, we obtain that $\G^{\Pi}$ is connected with exactly four distinct eigenvalues, by Lemma \ref{lemedge} and Propositions \ref{disco} and \ref{prop3ev}.
\end{proof}

Note that also the Paley graphs of large enough order have the property that they contain all graphs on a given number of vertices as induced subgraphs (see \cite[Thm.~3]{BolTho}), so instead of the symplectic two-graphs, one can also use the regular two-graphs that correspond to the Paley graphs. In any case, we conclude the following.

\begin{theorem}\label{thm:9} For every integer $t$, there exists a connected non-bipartite graph with four distinct eigenvalues and at least $t$ distinct valencies.
\end{theorem}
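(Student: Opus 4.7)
The claim is an immediate corollary of the infrastructure already developed, and the plan is essentially to feed Theorem \ref{thm:symplectic} an input graph $\Delta$ with many distinct valencies.

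For the main case $t\geq 3$, I would take $\Delta$ to be the disjoint union of the stars $K_{1,1}, K_{1,2}, \dots, K_{1,t}$. This is a graph on $n=\tfrac{1}{2}t(t+3)$ vertices, and its vertex degrees form exactly the set $\{1,2,\dots,t\}$ (the leaves contribute degree $1$, and the center of $K_{1,i}$ contributes degree $i$). So $\Delta$ has precisely $t$ distinct valencies. Applying Theorem \ref{thm:symplectic} to this $\Delta$ yields a graph $\G^{\Pi}$ on at most $2^{n+2}$ vertices that is connected, has exactly four distinct eigenvalues, has at least $t$ distinct valencies, and lies in the symplectic (non-trivial regular) two-graph. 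Because $\G^{\Pi}$ lives in a non-trivial regular two-graph, Proposition \ref{bipartite} guarantees that $\G^{\Pi}$ is not bipartite, which is the last property we need.

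For the small cases $t=1$ and $t=2$, I would simply exhibit standard examples of connected non-bipartite graphs with four distinct eigenvalues: for $t=1$ any non-bipartite regular graph with four eigenvalues (for instance a suitable distance-regular graph of diameter three), and for $t=2$ any known non-regular connected non-bipartite graph with four eigenvalues and two valencies. Since the statement only asks for \emph{at least} $t$ valencies, any such example suffices.

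The argument has no real obstacle: all the substantive work (bounding eigenvalue multiplicities via the Seidel matrix, ruling out disconnected or three-eigenvalue switching-class members, and ruling out bipartiteness) has been carried out in Lemma \ref{lemedge} and Propositions \ref{disco}--\ref{bipartite}, and the induced-subgraph machinery for Sp$(2r)$ is packaged in Theorem \ref{thm:symplectic}. The only new ingredient is the elementary observation that graphs with arbitrarily many distinct valencies exist on a bounded number of vertices relative to $t$, which the star-union construction above makes explicit.
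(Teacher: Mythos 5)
Your proposal is correct and follows essentially the same route as the paper, which deduces Theorem \ref{thm:9} directly from Theorem \ref{thm:symplectic} together with Proposition \ref{bipartite}; your only addition is to make the input graph $\Delta$ (a union of stars) explicit, and to note that the cases $t\le 2$ are trivial since ``at least $t$'' is monotone.
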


As a side result of the construction method presented in the proof of Theorem \ref{thm:symplectic}, we obtain that it is possible to construct non-isomorphic graphs with the same spectrum, starting from non-isomorphic subgraphs with the same number of vertices and edges for $\Delta$ (see also the remark after Lemma \ref{lemedge}). This follows in this particular case from the fact that $n$ is small compared to $k=2^{2r-1}$, so that in $\G^{\Pi}$ there is a unique vertex with valency $n$, and its local graph is $\Delta$.

A computational experiment shows that among the $2^{15}$ graphs in the symplectic two-graph on 16 vertices, there are at least four connected non-isomorphic graphs with the same spectrum as $\G$, which itself is not connected. In total 15 possible spectra occur, and for each of these except for two, we obtain at least two non-isomorphic graphs with that particular spectrum. In one exceptional case we obtain the strongly regular Clebsch graph, and in the other we obtain a graph with spectral radius $4+\sqrt{27} \approx 9.1962$ and valencies $7$, $9$, $11$, and $13$. The maximum number of distinct valencies for a graph in this regular two-graph is five. For details, we refer to Table \ref{table:16}, where we list the possible spectral radii (first column) and sequences of valencies (middle column), and how many times each of these occur (last column). Note that the same sequence of valencies can be shared by non-isomorphic graphs. The graphs with spectral radius $6.8284$ all have the same sequence of valencies, but they are not all isomorphic because for some of these, the two vertices with valency four are adjacent, while for others they are not. We managed however to show that the $960$ graphs with spectral radius $9.1962$ are all isomorphic. Note that if $\rho$ is the spectral radius, then the full spectrum is given by $\{\rho^{(1)},2^{(5)},8-\rho^{(1)},-2^{(9)}\}$, see Lemma \ref{lemedge}.

\begin{table}
\begin{center}
\begin{tabular}{|c|c|c|}
\hline
spectral radius & valencies & number \\[1ex]
\hline
$6$ & $6^{(16)}$  &  $70$  \\[1ex]%\hline
$6.6458$ & $3^{(1)}, 5^{(3)}, 7^{(12)}$   & $240$ \\[1ex]  %\hline
$6.6458$ & $5^{(6)}, 7^{(9)}, 9^{(1)}$   & $1120$ \\[1ex]  %\hline
$6.8284$ & $4^{(2)}, 6^{(8)}, 8^{(6)}$   & $2160$ \\[1ex]  %\hline
$7.3166$ & $3^{(1)}, 5^{(3)}, 7^{(8)}, 9^{(4)}$   & $2880$ \\[1ex]  %\hline
$7.3166$ & $5^{(6)}, 7^{(5)}, 9^{(5)}$   & $1152$ \\[1ex]  %\hline
$7.4641$ & $2^{(1)}, 6^{(6)}, 8^{(8)}, 10^{(1)}$   & $720$ \\[1ex]  %\hline
$7.4641$ & $4^{(4)}, 8^{(12)}$   & $240$ \\[1ex]  %\hline
$7.4641$ & $4^{(2)}, 6^{(6)}, 8^{(6)}, 10^{(2)}$   & $3360$ \\[1ex]  %\hline
$7.8730$ & $1^{(1)}, 7^{(9)}, 9^{(6)}$   & $240$ \\[1ex]  %\hline
$7.8730$ & $3^{(1)}, 5^{(2)}, 7^{(7)}, 9^{(5)}, 11^{(1)}$   & $2880$ \\[1ex]  %\hline
$7.8730$ & $5^{(5)}, 7^{(4)}, 9^{(6)}, 11^{(1)}$   & $1440$ \\[1ex]  %\hline
$8$ & $0^{(1)}, 8^{(15)}$   & $16$ \\[1ex]  %\hline
$8$ & $2^{(1)}, 6^{(4)}, 8^{(8)}, 10^{(3)}$   & $960$ \\[1ex]  %\hline
$8$ & $4^{(3)}, 8^{(12)}, 12^{(1)}$ & $240$ \\[1ex]  %\hline
$8$ & $4^{(2)}, 6^{(4)}, 8^{(6)}, 10^{(4)}$   & $2880$ \\[1ex]  %\hline
$8$ & $6^{(10)}, 10^{(6)}$   & $192$ \\[1ex]  %\hline
$8.3589$ & $3^{(1)}, 5^{(2)}, 7^{(3)}, 9^{(9)}, 11^{(1)}$   & $960$ \\[1ex]  %\hline
$8.3589$ & $3^{(1)}, 7^{(9)}, 9^{(3)}, 11^{(3)}$   & $320$ \\[1ex]  %\hline
$8.3589$ & $5^{(3)}, 7^{(6)}, 9^{(4)}, 11^{(3)}$   & $1920$ \\[1ex]  %\hline
$8.4721$ & $4^{(1)}, 6^{(4)}, 8^{(6)}, 10^{(4)}, 12^{(1)}$   & $2880$ \\[1ex]  %\hline
$8.4721$ & $6^{(8)}, 10^{(8)}$   & $180$ \\[1ex]  %\hline
$8.7958$ & $5^{(3)}, 7^{(6)}, 9^{(4)}, 11^{(3)}$   & $1920$ \\[1ex]  %\hline
$8.7958$ & $5^{(2)}, 7^{(4)}, 9^{(8)}, 11^{(1)}, 13^{(1)}$   & $720$ \\[1ex]  %\hline
$8.8990$ & $4^{(2)}, 8^{(6)}, 10^{(8)}$   & $240$ \\[1ex]  %\hline
$8.8990$ & $6^{(4)}, 8^{(6)}, 10^{(4)}, 12^{(2)}$   & $1440$ \\[1ex]  %\hline
$9.1962$ & $7^{(6)}, 9^{(6)}, 11^{(3)}, 13^{(1)}$   & $960$ \\[1ex]  %\hline
$9.2915$ & $6^{(2)}, 8^{(6)}, 10^{(6)}, 12^{(2)}$   & $480$ \\[1ex]  %\hline
$9.2915$ & $6^{(1)}, 8^{(8)}, 10^{(6)}, 14^{(1)}$   & $240$ \\[1ex]  %\hline
$9.2915$ & $8^{(12)}, 12^{(4)}$   & $80$ \\[1ex]  %\hline
$9.5678$ & $5^{(1)}, 9^{(10)}, 11^{(5)}$   & $96$ \\[1ex]  %\hline
$9.5678$ & $9^{(15)}, 15^{(1)}$   & $16$ \\[1ex]  %\hline
$10$ & $10^{(16)}$   & $6$ \\[1ex]  \hline
\end{tabular}
\caption{Graphs in the switching class of the symplectic two-graph on 16 vertices.}\label{table:16}
\end{center}
\end{table}

In this context, it is good to mention that Cioab\u{a}, Haemers, Vermette, and Wong \cite{friendship} recently showed that all Friendship graphs except the one on $33$ vertices are determined by the spectrum. These graphs also have four distinct eigenvalues of which two are simple, just like the non-regular graphs in a regular two-graph. Also, Van Dam \cite[Thm.~4.4, \S 4.5.3]{Dam4} characterized several regular graphs with four eigenvalues of which two are simple. Moreover, for most of the latter characterizations, Seidel switching plays a key role. In general, however, we expect that almost all graphs with few (say at most seven) eigenvalues are not determined by the spectrum. We expect this even more for graphs with four eigenvalues of which two are simple. For the general question of which graphs are determined by the spectrum, we refer to \cite{DH03,DH09}.

\section{Graphs with three eigenvalues}

Let us return in this final section to De Caen's original question. Currently, we know only of finitely many connected graphs with three eigenvalues and three valencies. More specifically, Spence (see \cite{Dam3ev}) constructed an example on 24 vertices, De Caen, Van Dam, and Spence \cite{dCvDS} constructed examples on 36 and 43 vertices, and Bridges and Mena \cite{BM} constructed examples on 46 and 97 vertices. Let us see whether we can find an example with three eigenvalues and four valencies though.

Consider a connected graph with three distinct eigenvalues. As long as the number of distinct valencies is at most
three, the partition of the vertices according to their valencies is equitable (see \cite{Dam3ev}), and this makes a search for putative parameter sets for such graphs easier. It is unclear whether the `valency partition' is equitable if
the number of valencies is larger than three, or whether such graphs can exist at all. A putative parameter set with four valencies (in fact, the one with the smallest number of vertices according to 15-year old, but unverified, computations) is the following one on $51$ vertices and spectrum $\{30^{(1)},3^{(20)},-3^{(30)}\}$. The computations show that a graph with this spectrum must have valencies $13$, $18$, $34$, and $45$, occurring $15$, $5$, $30$, and $1$ times, respectively. In fact, using the techniques of \cite{Dam3ev} it can be shown that in this particular case, the valency partition is also equitable, with quotient matrix
$$\begin{bmatrix}2 &0& 10& 1\\0 &0& 18&0 \\5&3&25&1 \\15&0&30&0\end{bmatrix}.$$
Quite a bit of this graph is therefore determined. Besides the trivial parts, one can show that the incidence structure between the five vertices of valency $18$ and $30$ vertices of valency $34$ is a $2$-$(5,3,9)$ design, and there is only one such design: three times the full design of all triples on five points. We leave it as a problem to the reader to finish the (de-)construction.\\

\bigskip
{\bf Acknowledgment.} The authors thank Sebastian Cioab\u{a}, Willem Haemers, and  Bojan Mohar for a discussion on the topic of this paper during a meeting in Durham, July 2013. They also thank the referees for comments on an earlier version that helped improve the presentation of the results. JHK thanks the Chinese Academy of Sciences for its support under the `100 talents' program.

%%%%%%%%%%%%%%%%%%%%%%%%%%%%%%%%%%%%%%%%%%%%%%%%%%%%%%%%%%%%%

\end{document}